\documentclass[11pt]{article}
\usepackage[margin=1.1in]{geometry}
\usepackage{amsmath,amssymb,amsthm}
\usepackage[hidelinks]{hyperref}

\newtheorem{theorem}{Theorem}
\newtheorem{proposition}{Proposition}

\theoremstyle{remark}
\newtheorem{remark}{Remark}

\title{An explicit construction of two completely independent\\
spanning trees in the four-dimensional dual-cube}
\author{Jitendra Prajapati\\ \small Independent Researcher\\ \small \href{mailto:jitendraprajapati2603@gmail.com}{\texttt{jitendraprajapati2603@gmail.com}}}
\date{August 2026}

\begin{document}
\maketitle

\begin{abstract}
Lalou, Mbarek, Skender and Togni (arXiv:2607.25917) proved that the
$n$-dimensional dual-cube $F_n$ admits two completely independent spanning
trees for every $n\ge 5$, observed that none exist for $n\le 3$, and
identified $F_4$ as the first unresolved case, reporting more than $700$
hours of inconclusive computation. We settle this case affirmatively by an
explicit construction, completing the classification: $F_n$ admits two
completely independent spanning trees if and only if $n\ge 4$. The
internal-vertex sets of the two trees are the level sets of a single
ten-term cubic polynomial over $\mathbb{F}_2$ in the seven vertex bits, and
correctness reduces to finite connectivity checks that are machine-verified
by a solver-free program distributed with the certificate. In $F_4$ the two
trees necessarily use $254$ of the $256$ edges. We also report exact
infeasibility results for simpler rules of the same shape: within the
search model, no affine or quadratic rule works, and ten terms is the
fewest possible for a cubic rule.
\end{abstract}

\section{Introduction}

Let $G=(V,E)$ be a simple undirected graph. A spanning tree $T$ of $G$ is a
connected acyclic subgraph with $V(T)=V(G)$; a vertex $x$ is an
\emph{internal} vertex of $T$ if $d_T(x)\ge 2$ and a \emph{leaf} otherwise.
Two $(x,y)$-paths $P_1,P_2$ are \emph{openly disjoint} if they are
edge-disjoint and share no vertex other than $x$ and $y$. Spanning trees
$T_1,\dots,T_k$ of $G$ are \emph{completely independent} (CIST) if for
every pair of vertices $x,y$, the $(x,y)$-paths in $T_1,\dots,T_k$ are
pairwise openly disjoint.

The study of CISTs began with Hasunuma~\cite{Hasunuma2001}, who gave the
following characterization.

\begin{theorem}[\cite{Hasunuma2001}]\label{thm:hasunuma}
Spanning trees $T_1,\dots,T_k$ of a graph $G$ are completely independent if
and only if they are edge-disjoint and, for every vertex $x\in V(G)$, there
is at most one index $i$ with $d_{T_i}(x)>1$.
\end{theorem}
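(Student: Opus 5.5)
We prove the two directions of Theorem~\ref{thm:hasunuma} separately. Throughout we use the fact that two paths in a tree between the same endpoints coincide, and that in a tree every vertex on the interior of a path has degree at least $2$ in that tree.

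\emph{Necessity.} Suppose $T_1,\dots,T_k$ are completely independent. For edge-disjointness, take an edge $xy$ lying in both $T_i$ and $T_j$ with $i\ne j$. The $(x,y)$-path in each tree is then the single edge $xy$, so the two paths share an edge. This contradicts openly disjointness.

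For the degree condition, suppose some vertex $x$ has $d_{T_i}(x)\ge 2$ and $d_{T_j}(x)\ge 2$ with $i\ne j$. The idea is to find $u,v\neq x$ whose $(u,v)$-paths in both $T_i$ and $T_j$ pass through $x$ as an interior vertex. In $T_i$, removing $x$ leaves components $C_1,\dots,C_p$ with $p\ge2$; in $T_j$, removing $x$ leaves components $D_1,\dots,D_q$ with $q\ge2$. We need $u,v\ne x$ lying in different $C$'s and also in different $D$'s. This is the main obstacle, and we handle it by a partition argument on $V\setminus\{x\}$.

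Let $u\ne x$ be arbitrary, with $u\in C_a\cap D_b$. If some $v$ avoids both $C_a$ and $D_b$, then the pair $(u,v)$ works. Otherwise $V\setminus\{x\}\subseteq C_a\cup D_b$. Pick $w\notin C_a$ (possible since $p\ge2$) and $z\notin D_b$ (possible since $q\ge2$). Then $w\in D_b\setminus C_a$ and $z\in C_a\setminus D_b$. So $w$ and $z$ lie in different $C$'s and in different $D$'s, and the pair $(w,z)$ works.

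With such a pair, $x$ is an interior vertex of both $(u,v)$-paths. So they share a vertex other than the endpoints, which contradicts complete independence.

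\emph{Sufficiency.} Assume the trees are edge-disjoint and each vertex is internal in at most one tree. Fix $x\ne y$ and $i\ne j$, and let $P_i,P_j$ be the $(x,y)$-paths in $T_i,T_j$. They are edge-disjoint because the trees are. Suppose some vertex $z\notin\{x,y\}$ lies on both paths. Then $z$ is interior to both paths, so it has degree at least $2$ in both $T_i$ and $T_j$, contradicting the hypothesis. Hence $P_i$ and $P_j$ are openly disjoint, and the trees are completely independent.
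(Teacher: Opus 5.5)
Your proof is correct and complete. The paper gives no proof of this statement: it is quoted from Hasunuma's 2001 paper and used as a black box, so there is no in-paper route to compare with. Your argument is the natural direct verification of the characterization.

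The one step with real content is finding two vertices separated by $x$ in both $T_i-x$ and $T_j-x$, and your case split handles it correctly. Either some $v\neq x$ avoids $C_a\cup D_b$, or else $V\setminus\{x\}\subseteq C_a\cup D_b$. In the latter case this forces $w\in D_b\setminus C_a$ and $z\in C_a\setminus D_b$, which are separated in both trees.
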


Hasunuma also proved that deciding the existence of two CISTs is
NP-complete~\cite{Hasunuma2002}. Araki~\cite{Araki2014} reformulated
existence in terms of vertex partitions; we use the following form, as
restated in~\cite{QinHaoWu2022}.

\begin{theorem}[\cite{Araki2014}]\label{thm:araki}
A connected graph $G$ has two completely independent spanning trees if and
only if $V(G)$ admits a partition $\{V_1,V_2\}$ such that $G[V_1]$ and
$G[V_2]$ are connected and the bipartite subgraph of $G$ induced by the
edge set $E(V_1,V_2)$ has no tree component. In that case the trees can be
chosen so that $V_i$ is exactly the set of internal vertices of $T_i$.
\end{theorem}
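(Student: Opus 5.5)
We prove Theorem~\ref{thm:araki} using Theorem~\ref{thm:hasunuma}, which lets us treat complete independence as a purely local condition: the trees are edge-disjoint, and each vertex is internal in at most one of them.

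\emph{Necessity.} Let $T_1,T_2$ be CISTs, and let $I_i$ be the set of internal vertices of $T_i$. By Theorem~\ref{thm:hasunuma} the sets $I_1$ and $I_2$ are disjoint. Vertices lying in neither set are leaves of both trees, and we put them in $V_1$. So set $V_1=V\setminus I_2$ and $V_2=I_2$. (The degenerate cases with $|V|\le 2$ are checked directly.)
\begin{itemize}
\item[(a)] $G[V_2]$ is connected. Deleting all leaves of $T_2$ leaves the subtree $T_2[I_2]$, which is connected.
\item[(b)] $G[V_1]$ is connected. Every vertex of $V_1$ is either internal in $T_1$ or a leaf of $T_1$. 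Removing the leaves of $T_1$ that lie in $V_2$ from $T_1$ leaves a connected subtree whose vertex set contains $V_1$. Each leaf $v$ of $T_1$ lying in $V_1$ has its unique $T_1$-neighbour in $I_1\subseteq V_1$. Hence this subtree lives inside $G[V_1]$.
\item[(c)] The bipartite graph $B=(V,E(V_1,V_2))$ has no tree component. Suppose some component $C$ of $B$ were a tree. Every $V_1$--$V_2$ edge of $T_1$ and of $T_2$ lies in $B$. Each $v\in V_2$ is a leaf of $T_1$, and its $T_1$-edge goes to $I_1\subseteq V_1$, so it is in $B$. Each $u\in V_1$ is a leaf of $T_2$, and its $T_2$-edge goes to $I_2=V_2$, so it is also in $B$. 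Thus every vertex of $C$ carries a distinguished incident edge of $B$: from $T_1$ for vertices in $V_2$, and from $T_2$ for vertices in $V_1$. These edges are distinct, because the trees are edge-disjoint and an edge counted twice would have to be the $T_1$-edge of its $V_2$-end and also the $T_2$-edge of its $V_1$-end. That gives at least $|C|$ edges in $C$, so $C$ contains a cycle and is not a tree. Vertices with no edge in $B$ cannot occur, since each vertex has at least one distinguished edge.
\end{itemize}

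\emph{Sufficiency.} Given the partition, take a spanning tree $S_i$ of $G[V_i]$ for $i=1,2$. In each component $C$ of $B$, which is connected, bipartite and contains a cycle, we must orient edges so that every vertex of $C$ picks one private edge. A vertex in $V_2$ sends its edge to $T_1$, and a vertex in $V_1$ sends its edge to $T_2$.

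This is the main step. A connected graph with at least as many edges as vertices has an injective assignment of vertices to incident edges. To build it, take a spanning tree of $C$ plus one extra edge, forming a unicyclic subgraph. Orient the cycle cyclically and orient the tree edges toward the cycle, so each vertex owns its out-edge.

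Then set $T_1=S_1\cup\{$chosen edges of $V_2$-vertices$\}$ and $T_2=S_2\cup\{$chosen edges of $V_1$-vertices$\}$. Each $T_i$ is a spanning tree: it is $S_i$ with pendant edges attached to the other side's vertices. The two trees are edge-disjoint because the assignment is injective and $S_1,S_2$ use internal edges of different sides. Internal vertices of $T_i$ lie in $V_i$, so Theorem~\ref{thm:hasunuma} gives complete independence.

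If $|V_i|=1$, the single vertex of $V_i$ can be a leaf, which is why the statement says ``can be chosen'' as the internal set. Padding this case so that $V_i$ is exactly the internal set is a minor bookkeeping point.
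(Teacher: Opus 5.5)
Your proof of the equivalence itself is correct and is the standard argument. The paper gives no proof of its own; it cites Araki. Still, the construction it uses for $F_4$ in Section~3 is exactly your sufficiency step: take spanning trees of $G[V_1]$ and $G[V_2]$, orient a spanning unicyclic subgraph of each cut component functionally (in $F_4$ the components are themselves unicyclic), and let each vertex donate its out-edge as a pendant edge to the other side's tree. Your necessity argument is also sound for $|V|\ge 3$: one distinguished cut edge per vertex, injective by edge-disjointness, so no component of $B$ is a tree.

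The gap is the final sentence of the statement, which you wave off as bookkeeping. Your construction only shows that every internal vertex of $T_i$ lies in $V_i$. A vertex of $V_1$ that is a leaf of $S_1$ and is the endpoint of no cut edge chosen by a vertex of $V_2$ stays a leaf of $T_1$. This has nothing to do with $|V_i|=1$: if $V_1=\{x\}$ and $|V_2|\ge 2$, then $T_1$ is a star centred at $x$. Worse, the clause cannot be proved in this generality. Take $V_1=\{a,b,c\}$ and $V_2=\{w,x,y\}$, with edges $ab,ac,wx,xy$ inside the parts and cut edges $bw,aw,ax,ay,cw,cx,cy$. Both parts induce connected graphs, and the cut graph is connected with $7$ edges on $6$ vertices. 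So this is a valid partition, and your construction does yield a CIST pair from it. But $b$ has degree $2$, so in \emph{every} pair of edge-disjoint spanning trees it has degree $1$ in each and is internal in neither. No choice of trees, or of partition, makes the internal sets exactly $V_1,V_2$; the correct general conclusion is containment only.

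For the paper's application, exactness therefore has to be checked for the specific partition, and in $F_4$ it does hold. Each cut component is exactly unicyclic, so every cut edge is used and each $x\in V_1$ receives $d_B(x)-1$ pendant edges. Meanwhile $T_1'$ keeps all of $F_4[V_1]$ except $\{21,85\}$, so $d_{T_1'}(x)=4-1=3$, or $2$ if $x\in\{21,85\}$. The same holds symmetrically for $V_2$.
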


Constructions of two CISTs are known for hypercubes and several dense
hypercube variants~\cite{PaiChang2016}; see the survey~\cite{ChengWangFan2023}.
The dual-cube $F_n$, introduced by Li and Peng~\cite{LiPeng2000}, is sparse
by design, which places it near the edge-count threshold for two
edge-disjoint spanning trees. Lalou, Mbarek, Skender and
Togni~\cite{Lalou2026} recently proved that $F_n$ admits two CISTs for
every $n\ge 5$, observed that the edge count already obstructs existence
for $n\le 3$, and reported that the case $n=4$ resisted more than $700$
hours of computation and was left unresolved.

In this note we settle the remaining case.

\begin{theorem}\label{thm:main}
$F_4$ admits two completely independent spanning trees. Consequently, the
dual-cube $F_n$ admits two completely independent spanning trees if and
only if $n\ge 4$.
\end{theorem}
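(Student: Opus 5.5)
By Theorem~\ref{thm:araki}, it suffices to exhibit a partition $\{V_1,V_2\}$ of $V(F_4)$ such that $F_4[V_1]$ and $F_4[V_2]$ are connected and every component of the bipartite graph $B$ on $E(V_1,V_2)$ contains a cycle.

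Recall the structure of $F_4$. A vertex is a $7$-bit string $(c,a,b)$ with class bit $c$ and $a,b\in\mathbb{F}_2^3$. There is one cross edge flipping $c$. For $c=0$ the cluster edges flip one bit of $b$; for $c=1$ they flip one bit of $a$. So $F_4$ is $4$-regular with $128$ vertices and $256$ edges.

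\textbf{Edge count.} Two connected induced subgraphs need at least $|V_1|-1+|V_2|-1=126$ edges. The bipartite part needs at least $|V_1|+|V_2|=128$ edges, since a union of components each containing a cycle has at least as many edges as vertices. This totals $254$ of the $256$ edges, so the construction is extremely tight. Almost every induced edge must be a tree edge, and $B$ must be essentially a union of unicyclic components. In particular, $F_4[V_1]$ and $F_4[V_2]$ each contain at most one or two cycles in total.

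\textbf{Construction.} I will encode the partition as $V_1=\{x: f(x)=0\}$ for a Boolean polynomial $f$ in the seven bits, which makes the certificate compact. I will first try symmetric ansatzes: $f$ invariant under the automorphism swapping the two classes together with $a\leftrightarrow b$, and possibly under coordinate permutations acting simultaneously on $a$ and $b$. Affine and quadratic $f$ should be tried first and will likely fail. The edge budget above forces highly irregular degree patterns, which low-degree rules cannot produce. I would then search over cubic $f$ with few monomials using a SAT/ILP formulation or a randomized local search. The hard constraints are the exact edge budget, connectivity of both induced subgraphs, and no acyclic component in $B$.

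\textbf{Verification.} Once a candidate $f$ is found, I verify it mechanically:
\begin{itemize}
\item[(i)] compute $V_1,V_2$;
\item[(ii)] check that $F_4[V_i]$ is connected, by BFS;
\item[(iii)] for each component of $B$, check that its edge count is at least its vertex count.
\end{itemize}
Each check is a finite computation on $128$ vertices. Theorem~\ref{thm:araki} then yields the two CISTs, with $V_i$ the internal vertices of $T_i$.

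\textbf{Classification.} The statement that $F_n$ admits two CISTs if and only if $n\ge4$ follows by combining this with the cited results for $n\ge5$ and $n\le3$.

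\textbf{Main obstacle.} The hard step is finding the partition itself. Because only two edges of slack remain, generic search stalls, as the reported $700$ hours show. The plan is to shrink the search space drastically: restrict to structured polynomial rules and impose symmetry, so a solver explores only a small family in which the budget constraints are checked first.
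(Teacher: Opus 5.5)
Your reduction to Araki's partition criterion, your edge count showing that $254$ of the $256$ edges are forced, your checks (ii)--(iii), and your final appeal to the known results for $n\le 3$ and $n\ge 5$ are all sound and match the paper's framework. Check (iii) is correct provided $B$ is taken on all of $V_1\cup V_2$, so that a vertex with no cut edge counts as a trivial tree component.

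The gap is that the one new claim, that $F_4$ actually has such a partition, is never established. You describe a search procedure: a symmetric ansatz, then affine and quadratic rules, then sparse cubic rules via SAT/ILP or local search. You then say ``once a candidate $f$ is found, I verify it.'' But no $f$ is ever produced, and nothing in your argument guarantees the search succeeds. An existence statement about a single finite graph needs either the object itself or a structural argument that it exists; a search plan is neither. This is exactly where the earlier $700$-hour computation stalled. The reduction and edge count alone give no evidence that a valid partition exists.

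Your plan also gives no reason to expect success in the families you would search first. The paper reports that, at least in the model where every vertex is internal in exactly one tree with a $64/64$ split, no affine rule works, no quadratic rule works, and no cubic rule with at most nine terms works. You also offer no argument that your class-swap-symmetric family contains a solution.

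The paper closes this gap with an explicit witness: $V_1=\{v: z(v)=1\}$ and $V_2=\{v: z(v)=0\}$ for the ten-term cubic $z$ of \eqref{eq:anf}. It then verifies by finite computation (Proposition~\ref{prop:partition}) that both parts have $64$ vertices, both induced subgraphs are connected and unicyclic, and the cut graph consists of two unicyclic components covering all $128$ vertices. Inserting that witness into your step (i) is what turns your verification outline into a proof.
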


The construction (Section~\ref{sec:construction}) is compact: the
internal-vertex partition is given by a ten-term cubic polynomial over
$\mathbb{F}_2$ in the seven vertex bits, and its correctness reduces via
Theorem~\ref{thm:araki} to finite connectivity checks. A certificate
pinning the two trees explicitly, together with a verifier that uses only
the Python standard library and checks all $\binom{128}{2}=8128$ pairs of
tree paths directly against the definition, is publicly available
(Section~\ref{sec:verification}). Section~\ref{sec:searches} reports exact
infeasibility results for simpler rules of the same shape.

\section{Preliminaries}\label{sec:prelim}

Following~\cite{Lalou2026}, the dual-cube $F_n$ is the $n$-regular
$n$-connected graph on the vertex set $\{0,1\}^{2n-1}$ in which two
vertices $u=(u_{2n-1},\dots,u_1)$ and $v=(v_{2n-1},\dots,v_1)$ are adjacent
if and only if they differ in exactly one bit position $i$ and:
\begin{enumerate}
\item if $1\le i\le n-1$ then $u_{2n-1}=v_{2n-1}=0$;
\item if $n\le i\le 2n-2$ then $u_{2n-1}=v_{2n-1}=1$;
\item $i=2n-1$ is always allowed.
\end{enumerate}
Thus $F_4$ has $128$ vertices, $256$ edges, and degree $4$; it consists of
sixteen $Q_3$ clusters, eight in each class, joined by a perfect matching
of $128$ cross-edges.

For consistency with the machine-readable certificate we index bits from
zero: a vertex $v\in\{0,1\}^7$ of $F_4$ has bits $x_0,\dots,x_6$ with
$x_i=u_{i+1}$, so $x_6$ is the class bit, $x_0,x_1,x_2$ are the class-$0$
cluster coordinates and $x_3,x_4,x_5$ are the class-$1$ cluster
coordinates.

Two edge-disjoint spanning trees of $F_4$ use $2\cdot 127=254$ of the $256$
edges. By Theorem~\ref{thm:hasunuma}, in any CIST pair no vertex is
internal in both trees; since $F_4$ is $4$-regular, every vertex is then
either internal in exactly one tree or a leaf in both.

\section{The construction}\label{sec:construction}

Define $z\colon V(F_4)\to\mathbb{F}_2$ by
\begin{equation}\label{eq:anf}
z(v) \;=\; x_0x_1 \oplus x_2 \oplus x_1x_2 \oplus x_1x_3 \oplus x_3x_4
\oplus x_1x_5 \oplus x_4x_5 \oplus x_0x_6 \oplus x_3x_6 \oplus x_2x_4x_6,
\end{equation}
and set $V_1=\{v : z(v)=1\}$ and $V_2=\{v : z(v)=0\}$.

\begin{proposition}\label{prop:partition}
$|V_1|=|V_2|=64$; the induced subgraphs $F_4[V_1]$ and $F_4[V_2]$ are
connected with exactly $64$ edges each (hence unicyclic); and the bipartite
subgraph induced by the $128$ edges of $E(V_1,V_2)$ has exactly two
components, each with $64$ vertices and $64$ edges (hence unicyclic).
\end{proposition}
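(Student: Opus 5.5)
The statement is a finite check on a $128$-vertex graph, so the plan is to organize the verification so it can be done by hand (cluster by cluster) and is also mirrored by the machine check. I would exploit the cluster structure of $F_4$. Fix the class bit $x_6$ and the three coordinates that are constant on a cluster. Then $z$ restricted to that $Q_3$ cluster becomes a polynomial in the three free coordinates. For $x_6=0$ the free bits are $x_0,x_1,x_2$ and the fixed ones are $x_3,x_4,x_5$. Here $z=x_0x_1\oplus x_2\oplus x_1x_2\oplus x_1x_3\oplus x_1x_5\oplus c$ with $c=x_3x_4\oplus x_4x_5$. For $x_6=1$ the free bits are $x_3,x_4,x_5$, with $x_0,x_1,x_2$ fixed, and $z=x_3x_4\oplus x_4x_5\oplus x_3(x_1\oplus 1)\oplus x_5x_1\oplus x_2x_4\oplus c'$. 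In each of the $16$ clusters I would tabulate which of the $8$ vertices lie in $V_1$ and which in $V_2$. I would record the induced edges of $Q_3$ inside each part and the edges crossing between parts.

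\textbf{Step 1 (sizes).} Count $|V_1|$ by summing the per-cluster counts. Equivalently, show that $z$ is balanced. On each class-$0$ cluster, $z$ contains the linear term $x_2$ together with $x_1x_2$, so $z$ is affine in $x_2$ with coefficient $1\oplus x_1$. For $x_1=0$ it is balanced in $x_2$. For $x_1=1$ it reduces to $x_0\oplus(\text{const})$, which is balanced in $x_0$. Hence every class-$0$ cluster has exactly $4$ vertices in each part. A similar argument for class $1$ gives $64$ and $64$.

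\textbf{Step 2 (edge counts).} Count the induced edges of $V_1$ and of $V_2$ as the sum over the clusters of the monochromatic cluster edges, plus the monochromatic cross-edges. A cross-edge joins $v$ and $v\oplus e_6$. It is monochromatic exactly when $z(v)=z(v\oplus e_6)$, that is, when $x_0\oplus x_3\oplus x_2x_4=0$, which holds for exactly half of the $128$ cross-edges. A total of $256$ edges split into the two parts and the $128$ bichromatic edges then gives $64+64$ monochromatic edges. I would still check that each part receives exactly $64$ of them, using the per-cluster tables.

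\textbf{Step 3 (connectivity).} This is the main obstacle, because it is a genuinely global property. I would first show that $F_4[V_1]$ is connected. A graph with $64$ vertices and $64$ edges is connected if and only if it has exactly one cycle and no other components, so in practice I would build a spanning tree by BFS over the cluster quotient: inside each cluster the $V_1$-vertices form a few pieces, and the monochromatic cross-edges merge these pieces. Doing this by hand is tedious, so I would accept the machine BFS, which is what the paper's verifier does. The same procedure handles $V_2$. Connectivity together with $64$ edges then gives unicyclicity. Finally, for the bipartite graph on $E(V_1,V_2)$, which has $128$ vertices and $128$ edges, a BFS exhibits exactly two components. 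Each component must have $64$ vertices and $64$ edges. Since a connected component satisfies $e\ge v-1$, and here $e=v$ in each component, each one is unicyclic, so in particular neither is a tree. This gives the hypothesis of Theorem~\ref{thm:araki}. I expect the connectivity and component checks to be irreducibly computational, and the honest proof is to say that they are verified by the certificate program.
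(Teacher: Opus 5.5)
Your overall route is the paper's. The proposition is a finite check, and you defer connectivity and the component count to the verifier exactly as the paper does. Step~1 is also correct.

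\textbf{Step~2 fails as written.} The cross-edges form a perfect matching on the $128$ vertices, so there are only $64$ of them, alongside $16\cdot 12=192$ cluster edges. (The phrase ``perfect matching of $128$ cross-edges'' in the Preliminaries is a slip.) The condition $x_0\oplus x_3\oplus x_2x_4=0$ therefore holds on $32$ cross-edges, not $64$. More seriously, you obtain $64+64$ from ``the $128$ bichromatic edges''. But $|E(V_1,V_2)|=128$ is itself one of the claims, and you never count the bichromatic cluster edges, so the inference is circular.

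The missing count is short. Inside a cluster, the directional derivatives of $z$ are:
\begin{itemize}
\item class $0$, directions $x_0,x_1,x_2$: $x_1$, $x_0\oplus x_2\oplus x_3\oplus x_5$, $1\oplus x_1$;
\item class $1$, directions $x_3,x_4,x_5$: $1\oplus x_1\oplus x_4$, $x_2\oplus x_3\oplus x_5$, $x_1\oplus x_4$.
\end{itemize}
Each is a nonconstant affine function of the two free bits indexing the four parallel edges in that direction. Hence every cluster has exactly $6$ bichromatic edges, and $|E(V_1,V_2)|=96+32=128$. The degree-sum identity $4|V_i|=2e(V_i)+|E(V_1,V_2)|$ then gives $e(V_i)=64$ for both parts at once, with no need to split the monochromatic edges by hand.

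Similarly, in Step~3 the claim that each cut component has $64$ vertices and $64$ edges does not follow from there being two components. It is part of what must be checked.

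\textbf{Step~3 is more hand-checkable than you expect.} Your cluster tables show that in every cluster, each of $V_1$ and $V_2$ induces a path on four vertices. The six cut edges inside each cluster form two further paths on four vertices. This gives two reductions:
\begin{itemize}
\item $F_4[V_i]$ is connected if and only if the quotient on the $16$ clusters, joined by the $16$ monochromatic cross-edges of colour $i$, is connected.
\item The cut-graph claim reduces to a $32$-node, $32$-edge quotient (two cut paths per cluster, joined by the $32$ bichromatic cross-edges). It must split into two components, each with $16$ nodes and $16$ edges.
\end{itemize}
Both quotients are small enough to check by hand, and doing so confirms the proposition. That would give a proof independent of any program, which the paper's bare appeal to the verifier does not provide.
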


\begin{proof}
This is a finite computation on a $128$-vertex graph; it is performed
independently of any solver by the verification program of
Section~\ref{sec:verification}.
\end{proof}

Since a unicyclic component is not a tree, Proposition~\ref{prop:partition}
and Theorem~\ref{thm:araki} prove Theorem~\ref{thm:main}: $\{V_1,V_2\}$ is
a CIST partition of $F_4$, and the classification follows
from~\cite{Lalou2026}. Every vertex is internal in exactly one tree, so the
pair realizes the balanced $64/64$ pattern.

The certificate pins the two trees explicitly, so correctness does not
depend on quoting Theorem~\ref{thm:araki}. The unique cycle of $F_4[V_2]$
contains the edge $\{0,64\}$ and the unique cycle of $F_4[V_1]$ contains
the edge $\{21,85\}$. Orient each unicyclic component of the cut graph
functionally, so that every vertex acquires exactly one outgoing cut edge
$\{u,\mathrm{out}(u)\}$. Then
\[
T_2^{\,\prime} = \bigl(E(F_4[V_2])\setminus\{\{0,64\}\}\bigr)\;\cup\;
\bigl\{\{u,\mathrm{out}(u)\} : u\in V_1\bigr\},
\]
and symmetrically $T_1^{\,\prime}$ with $\{21,85\}$ removed, are two
edge-disjoint spanning trees, with internal-vertex sets exactly $V_1$ and
$V_2$; the only edges of $F_4$ unused by $T_1^{\,\prime}\cup
T_2^{\,\prime}$ are $\{0,64\}$ and $\{21,85\}$.

\section{Machine verification}\label{sec:verification}

The construction is distributed as a short JSON certificate together with a
verifier that uses only the Python standard library --- no solver is
involved. The verifier (i)~rebuilds $F_4$ from the bit definition of
Section~\ref{sec:prelim}, (ii)~evaluates the polynomial~\eqref{eq:anf} and
checks every statement of Proposition~\ref{prop:partition},
(iii)~reconstructs $T_1^{\,\prime}$ and $T_2^{\,\prime}$ and checks that
they are edge-disjoint spanning trees with internal-vertex sets $V_1$ and
$V_2$, and (iv)~for each of the $8128$ unordered vertex pairs checks
directly that the two tree paths are openly disjoint. Step~(iv) is the
definition of complete independence, so the verified claim does not rest on
Theorem~\ref{thm:araki} or on any search software. A second, independently
fixed edge-list witness passes a separate verifier. Both programs refuse to
run under Python's assertion-stripping \texttt{-O} mode.

\section{Exact searches for simpler rules}\label{sec:searches}

The rule~\eqref{eq:anf} was found with a CP-SAT model (OR-Tools) whose
Boolean variables encode two edge-disjoint spanning trees together with the
coefficients of an algebraic normal form (ANF) for $z$, subject to the
\emph{exact-complement} pattern: every vertex is internal in exactly one
tree, with $64$ internal vertices in each. This pattern is a genuine model
restriction --- a general CIST pair may contain vertices that are leaves in
both trees --- and the results below are statements about this model only.
Within it, the solver returned exact infeasibility (not timeouts) for:
\begin{itemize}
\item all $127$ nonconstant affine rules
$z(v)=\mathrm{parity}(v\wedge m)$, $m\in\{1,\dots,127\}$;
\item every ANF of degree at most $2$ --- so, with the degree-$1$ case,
degree $3$ is the minimum ANF degree realizable in the model;
\item every ANF of degree at most $3$ with at most nine nonconstant terms
--- so ten terms is the minimum term count among degree-at-most-$3$ rules
in the model (sparser rules of degree exceeding $3$ were not excluded);
\item all rules of the separable form $f(x_0,x_1,x_2)\oplus
g(x_3,x_4,x_5)\oplus\alpha x_6$, and eight relation-based rules
$h(\mathrm{op}(A,B))\oplus\alpha x_6$ for $A=(x_0,x_1,x_2)$,
$B=(x_3,x_4,x_5)$, $\mathrm{op}\in\{\oplus,\,+\bmod 8,\,-\bmod 8,\,
\mathrm{Gray}(A)\oplus\mathrm{Gray}(B)\}$ and $\alpha\in\{0,1\}$.
\end{itemize}
These are solver statuses for the stated finite models. CP-SAT does not
emit solver-independent unsatisfiability certificates, so the negative
results carry the solver's usual trust assumptions (solver version and run
parameters are recorded with the data); the positive result of
Theorem~\ref{thm:main} does not depend on them.

\begin{remark}\label{rem:compound}
$F_4$ has the compound form $K_{8,8}(Q_3)$: contracting its sixteen $Q_3$
clusters yields $K_{8,8}$. CIST results for compound
graphs~\cite{QinHaoChang2020} state corollaries for compound graphs with
complete-graph clusters, as in $L\text{-}HSDC_m(m)=Q_m(K_m)$, and the
constructions in this line of work proceed via edge-disjoint Hamiltonian
cycles with clusters $K_{2n}$~\cite{ChengWangFan2023}. We have not been
able to consult the full sufficient condition of~\cite{QinHaoChang2020};
we note, however, that $Q_3$ is not complete and its $12$ edges cannot
carry two edge-disjoint spanning trees on $8$ vertices, so approaches
requiring per-cluster tree pairs cannot apply to $F_4$.
\end{remark}

\section{Concluding remarks}

The polynomial~\eqref{eq:anf} answers the existence question for $F_4$ with
a certificate short enough to print. Two natural questions remain. First,
the minimality statements of Section~\ref{sec:searches} are relative to the
exact-complement model and to the solver; a solver-independent proof that
no quadratic rule works, or a simpler rule outside the model (allowing
vertices that are leaves in both trees), would be of interest. Second,
Lalou et al.~\cite{Lalou2026} conjecture on the existence of $k$ CISTs in
higher-dimensional dual-cubes; whether algebraically defined partitions of
the present kind can produce three or more CISTs in $F_n$ for larger $n$ is
open.

\section*{Data availability}

The certificate, both verifiers, and the exact search records (including
solver versions and run parameters) are available at
\url{https://github.com/infinityscroll/f4-dualcube-cist}.
Verification requires one command:
\texttt{python3 verify\_anf\_structure.py anf-construction.json}.

\section*{Acknowledgements}

The search programs, the certificate, and an initial draft of this note
were produced with AI assistance (OpenAI Codex, with subsequent
verification and preparation assisted by Claude); the author directed the
work, independently re-verified the construction, and takes full
responsibility for the content. A documented literature search located no
earlier construction of two completely independent spanning trees in
$F_4$; no claim of priority is made pending expert review.

\end{document}